\numberwithin{equation}{section}
\newtheorem{theorem}{Theorem}[section]
\newtheorem{lemma}[theorem]{Lemma}
\newtheorem{proposition}[theorem]{Proposition}
\theoremstyle{remark}
\newtheorem{remark}[theorem]{Remark}
\theoremstyle{definition}
\theoremstyle{theorem}
\newtheorem{TheoremA}{Theorem}
\newtheorem*{maintheorem}{Main Theorem}
\newcommand{\ow}{\omega}
\newcommand{\p}{\partial}
\newcommand{\C}{{\mathbb{C}}}
\newcommand{\I}{{\mathcal{I}}}
\newcommand{\A}{{\mathcal{A}}}
\newcommand{\R}{{\mathbb{R}}}
\newcommand{\Z}{{\mathbb{Z}}}
\renewcommand{\epsilon}{\varepsilon}
\renewcommand{\theta}{\vartheta}
\DeclareMathOperator{\PSL}{PSL}
\DeclareMathOperator{\dR}{dR}
\DeclareMathOperator{\std}{std}
\DeclareMathOperator{\Ham}{Ham}
\DeclareMathOperator{\ev}{ev}
\DeclareMathOperator{\pr}{pr}
\DeclareMathOperator{\pt}{pt}
\DeclareMathOperator{\Diff}{Diff}
\DeclareMathOperator{\id}{id}
\DeclareMathOperator{\Symp}{Symp}
\DeclareMathOperator{\Fix}{Fix}
\DeclareMathOperator{\SO}{SO}
\newcommand{\subjclass}[2][2010]{%
  \let\@oldtitle\@title%
  \gdef\@title{\@oldtitle\footnotetext{#1 \emph{Mathematics Subject Classification.} #2.}}%
}
\newcommand{\keywords}[1]{%
  \let\@@oldtitle\@title%
  \gdef\@title{\@@oldtitle\footnotetext{\emph{Key words and phrases.} #1.}}%
}
\newcommand{\Date}[1]{%
  \let\@@@oldtitle\@title%
\gdef\@title{\@@@oldtitle\footnotetext{\emph{Date.} #1.}}%
}
\begin{document}
\title{{\bf \Large
Anti-symplectic involutions for Lagrangian spheres in a symplectic quadric surface
}}

\author{Joontae Kim and Jiyeon Moon}
\subjclass{53D12, 55M35, 32Q65}
\keywords{Anti-symplectic involutions, symplectic quadric surface, Lagrangian spheres}
\date{}

\maketitle

\begin{abstract}
We show that the space of anti-symplectic involutions of a monotone $S^2\times S^2$ whose fixed points set is a Lagrangian sphere is connected.
This follows from a stronger result, namely that any two anti-symplectic involutions in that space are Hamiltonian isotopic.

\end{abstract}


\section{Introduction and main results}
A \emph{symplectic manifold} $(M,\ow)$ is an even dimensional smooth manifold $M$ equipped with a closed non-degenerate 2-form $\ow$. 
Recall that an \emph{anti-symplectic involution} of a symplectic manifold $(M,\ow)$ is a diffeomorphism $R\in \Diff(M)$ satisfying $R^2=\id_M$ and $R^*\ow=-\ow$.
Its fixed point set $\Fix(R) = \{ x\in M \mid R(x)=x\}$ is a Lagrangian submanifold, that is, $\dim \Fix(R)=\frac{1}{2}\dim M$ and $\ow$ vanishes on the tangent bundle of $\Fix(R)$, if it is non-empty.
In this case, $\Fix(R)$ is called a \emph{real Lagrangian}.
We abbreviate by $\A(M,\ow)$ the \emph{space of anti-symplectic involutions} of $(M,\ow)$.
The \emph{space of smooth involutions of a smooth manifold} $M$ is denoted by $\I(M)$.
Throughout the paper, $\Diff(M)$ and all of its subspaces are endowed with the $C^\infty$-topology.
It is a prominent question to investigate the injectivity of the natural forgetful map $\pi_0\A(M,\ow)\to \pi_0\I(M)$ in a \emph{symplectic ruled surface}, the total space of an $S^2$-fibration over a closed oriented surface with a symplectic form that is non-degenerate on the fibres.
This formulates a version of the \emph{symplectic isotopy conjecture} \cite[Problem~14 in Section~14.2]{MS} about the symplectomorphism group of ruled surfaces.
In particular, if the map is injective, then any two elements in $\A(M,\ow)$ are isotopic if and only if they are smoothly isotopic.

In this paper, we explore the question when $(M,\ow)$ is the \emph{monotone symplectic quadric surface} $(Q,\ow)=(S^2\times S^2,\ow_0\oplus\ow_0)$, where $\ow_0$ is a Euclidean area form normalized by $\int_{S^2}\ow_0=1$, and when the fixed point set of $R\in \A(Q,\ow)$ is a Lagrangian \emph{sphere}.
Based on foliations by $J$-holomorphic curves, Gromov \cite{Gromov} showed that the symplectomorphism group $\Symp(Q,\ow)$ deformation retracts onto the isometry group $(\SO(3)\times \SO(3))\rtimes \Z/2$, where $\Z/2$ acts by interchanging two $S^2$-factors, and hence the natural map $\pi_0\Symp(Q,\ow)\to \pi_0\Diff(Q)$ is injective, whence the symplectic isotopy conjecture holds in this case.
Hind \cite{HindS2S2} proved that every Lagrangian sphere in $Q$ is Hamiltonian isotopic to the \emph{antidiagonal} $\overline{\Delta}=\{(x,-x)\mid x\in S^2\}$.
Therefore, any two real Lagrangian spheres $L_0=\Fix(R_0)$ and $L_1=\Fix(R_1)$ in $Q$ are Hamiltonian isotopic, and it is natural to ask whether the corresponding involutions $R_0$ and $R_1$ are Hamiltonian isotopic.
We say that $R_0,R_1\in \A(M,\ow)$ are \emph{Hamiltonian isotopic} if there exists $\phi\in \Ham(M,\ow)$ such that $\phi^*R_1 := \phi^{-1}\circ R_1\circ\phi = R_0$, where $\Ham(M,\ow)$ denotes the \emph{group of Hamiltonian diffeomorphisms} of $M$.
The classification of Hamiltonian isotopy classes of anti-symplectic involutions is \emph{a priori} finer than the one of real Lagrangians, see Figure~\ref{fig: 1}.
We write $R_{\overline{\Delta}}(x,y)=(-y,-x)$ for the anti-symplectic involution of $Q$ with $\Fix(R_{\overline{\Delta}})=\overline{\Delta}$.
The main result of the paper is rather modest.
\begin{maintheorem}
Every anti-symplectic involution of $(S^2\times S^2,\ow_0\oplus\ow_0)$ whose fixed point set is a Lagrangian sphere is Hamiltonian isotopic to $R_{\overline{\Delta}}$.
\end{maintheorem}
In particular, any two anti-symplectic involutions of $Q$ whose fixed point set is a Lagrangian sphere are Hamiltonian isotopic.
We abbreviate by $\A(Q,\ow,S^2) = \{R\in \A(Q,\ow)\mid \Fix(R) \overset{\text{diff}}{\cong} S^2\}$ and 
$\I(Q,S^2) = \{\sigma\in \I(Q) \mid \Fix(\sigma)\overset{\text{diff}}{\cong} S^2\}$.
As an immediate corollary, we obtain a sort of a symplectic phenomenon.
\begin{TheoremA}
	$\pi_0\A(Q,\ow,S^2)=0$, while $\pi_0\I(Q,S^2)\ne 0$.
\end{TheoremA}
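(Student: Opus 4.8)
The plan is to prove the two assertions of Theorem~A separately: the first directly from the Main Theorem, and the second by a homotopy-invariance argument on $H_2(Q;\Z)$.

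For $\pi_0\A(Q,\ow,S^2)=0$: since the fixed point set of an anti-symplectic involution is Lagrangian whenever it is non-empty, the condition $\Fix(R)\cong S^2$ says exactly that $\Fix(R)$ is a Lagrangian sphere, so the Main Theorem applies to every $R\in\A(Q,\ow,S^2)$. Given such an $R$, I would pick $\phi\in\Ham(Q,\ow)$ with $\phi^{-1}\circ R\circ\phi=R_{\overline{\Delta}}$ together with a smooth path $\phi_t$ in $\Ham(Q,\ow)$ from $\id$ to $\phi$ (recall that $\Ham(Q,\ow)$ is path-connected). Then $t\mapsto\phi_t^{-1}\circ R\circ\phi_t$ is a path in $\A(Q,\ow)$ --- each term squares to the identity and pulls $\ow$ back to $-\ow$ because $\phi_t$ is symplectic --- whose fixed point set $\phi_t^{-1}(\Fix R)$ is diffeomorphic to $S^2$ for all $t$. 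Thus this path stays inside $\A(Q,\ow,S^2)$ and joins $R$ to $R_{\overline{\Delta}}$, so $\A(Q,\ow,S^2)$ is path-connected.

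For $\pi_0\I(Q,S^2)\ne 0$: I would exhibit two elements of $\I(Q,S^2)$ lying in different path components of $\I(Q)$, namely the antidiagonal involution $R_{\overline{\Delta}}(x,y)=(-y,-x)$ with $\Fix(R_{\overline{\Delta}})=\overline{\Delta}\cong S^2$, and the transposition $R_{\Delta}(x,y)=(y,x)$, whose fixed point set is the diagonal $\Delta\cong S^2$. Writing $H_2(Q;\Z)=\Z A\oplus\Z B$ with $A=[S^2\times\pt]$ and $B=[\pt\times S^2]$, the transposition induces $(R_{\Delta})_*(A)=B$, whereas $(R_{\overline{\Delta}})_*(A)=-B$: indeed $R_{\overline{\Delta}}$ equals $R_{\Delta}$ postcomposed with the product of the antipodal maps, and since the antipodal map of $S^2$ has degree $-1$ that product acts on $H_2(Q;\Z)$ by $-\id$. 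In particular $(R_{\Delta})_*\ne(R_{\overline{\Delta}})_*$ in $\Aut(H_2(Q;\Z))$. As any path in $\I(Q)\subseteq\Diff(Q)$ is in particular a homotopy, and homotopic maps act identically on homology, $R_{\Delta}$ and $R_{\overline{\Delta}}$ cannot lie in the same path component of $\I(Q)$, a fortiori not of $\I(Q,S^2)$; hence $\I(Q,S^2)$ is disconnected.

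I do not expect any serious obstacle. The single step that merits a careful check is the homology computation for $R_{\overline{\Delta}}$, where a sign is easily mishandled. As an independent confirmation one can note that $[\Delta]=A+B$ and $[\overline{\Delta}]=A-B$ have different self-intersection numbers, and that for a smooth family $R_t$ of involutions the fixed loci $\Fix(R_t)$ assemble into a submanifold of $Q\times[0,1]$ on which the projection to $[0,1]$ is a proper submersion --- this is because differentiating $R_t\circ R_t=\id$ in $t$ shows $\partial_t R_t(x)$ lies in the $(-1)$-eigenspace of $(DR_t)_x$ for every $x\in\Fix(R_t)$ --- so by Ehresmann's theorem the inclusions $\Fix(R_t)\hookrightarrow Q$ form a smooth isotopy and $[\Fix(R_t)]\in H_2(Q;\Z)$ is independent of $t$, which again rules out an isotopy between $R_{\Delta}$ and $R_{\overline{\Delta}}$.
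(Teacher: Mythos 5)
Your proposal is correct and follows essentially the same route as the paper: the first assertion is deduced from the Main Theorem by conjugating $R$ along a path in $\Ham(Q,\ow)$ joining $\id$ to $\phi$ (the paper treats this as immediate), and the second is exactly the paper's argument that $\tau(x,y)=(y,x)$ and $R_{\overline{\Delta}}$ act differently on $H_2(Q)$ and hence lie in different components of $\I(Q,S^2)$. The sign computation $(R_{\overline{\Delta}})_*A=-B$ agrees with Lemma~\ref{lem: inducedmap_invol}, so no issues remain.
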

That $\pi_0\I(Q,S^2)\ne 0$ easily follows since the actions of $R_{\overline{\Delta}}$ and $\tau(x,y)=(y,x)$ on homology are different, that is, $(R_{\overline{\Delta}})_*\ne \tau_*$ on $H_2(Q)$, and hence they form different components in $\I(Q,S^2)$.
In consequence, the natural map $\pi_0\A(Q,\ow,S^2)\to \pi_0\I(Q,S^2)$ is injective, but not surjective.
It is known that $\A(Q,\ow)$ has at least four components for topological reasons; two of them have no fixed points (but the corresponding quotient manifolds are not diffeomorphic), and the fixed point sets of the remaining two cases are $S^2$ and $T^2$, see \cite[6.11.7~and~6.11.8]{Enriques} for details. 
By \cite{Kim2}, every real Lagrangian $T^2$ in $Q$ is Hamiltonian isotopic to the \emph{Clifford torus}, defined as the product of the equators in each $S^2$-factor.
Hence, there is no obvious reason to find non-isotopic anti-symplectic involutions of $Q$ having $T^2$ as the fixed point set.
\begin{remark}
We refer to an intriguing work by Kharlamov--Shevchishin \cite{KS} about anti-symplectic involutions in rational symplectic 4-manifolds.
The scheme of their proof can be adapted to obtain our main theorem.
It can be used to show the connectedness of the space of anti-symplectic involutions of $Q$ corresponding to other topological types.
Nevertheless, our approach is still interesting in its own right since this is more explicit and direct. 
\end{remark}
\paragraph{An outline of the proof.}
We first recall the proof of the result by Hind \cite{HindS2S2}.
A crucial ingredient is the following non-trivial statement.
\begin{theorem}[Hind]\label{thm: Hind}\label{thm: hind}
	Let $L$ be a Lagrangian sphere in $Q$. Then there exists a tame almost complex structure $J$ on $Q$ such that each leaf of the Gromov's foliations $\mathcal{F}_1$ and $\mathcal{F}_2$ associated to $J$ intersects $L$ transversely at a single point.
\end{theorem}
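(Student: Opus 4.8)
The plan is to combine Hind's classification of Lagrangian spheres in $Q$ with Gromov's theory of $J$-holomorphic foliations. First I would invoke the result, also due to Hind \cite{HindS2S2}, that any Lagrangian sphere $L\subset Q$ is Hamiltonian isotopic to the antidiagonal $\overline{\Delta}$. Since the property to be proved --- the existence of a tame $J$ whose associated Gromov foliations $\mathcal{F}_1,\mathcal{F}_2$ meet $L$ transversely in one point each --- is manifestly invariant under symplectomorphisms (if $\phi\in\Ham(Q,\ow)$ and $J$ works for $L$, then $\phi_*J$ works for $\phi(L)$), it suffices to establish the statement for a single representative. Thus the problem reduces to exhibiting one tame almost complex structure adapted to the standard antidiagonal $\overline{\Delta}=\{(x,-x)\mid x\in S^2\}$.

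For the antidiagonal the natural candidate is the split integrable structure $J_0=j\oplus j$, where $j$ is the standard complex structure on $S^2=\C P^1$ compatible with $\ow_0$; this is $\ow$-tame (indeed compatible). The two Gromov foliations $\mathcal{F}_1,\mathcal{F}_2$ associated to $J_0$ are simply the two rulings of $S^2\times S^2$ by the $J_0$-holomorphic spheres $S^2\times\{\mathrm{pt}\}$ and $\{\mathrm{pt}\}\times S^2$, the unique embedded $J_0$-holomorphic representatives in the classes $[S^2\times\mathrm{pt}]$ and $[\mathrm{pt}\times S^2]$. I would then check directly that each leaf meets $\overline{\Delta}$ in exactly one point: the leaf $S^2\times\{y\}$ meets $\overline{\Delta}$ at $(-y,y)$ and nowhere else, and symmetrically for the other ruling. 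Transversality of the intersection is a pointwise linear-algebra computation at $(-y,y)$: the tangent space to the leaf is the first $\C$-factor $T_{-y}S^2\oplus 0$, while $T_{(-y,y)}\overline{\Delta}=\{(v,-v)\mid v\in T_{-y}S^2\}$ (identifying $T_yS^2\cong T_{-y}S^2$ via the antipodal map), and these two Lagrangian subspaces of $T_{-y}S^2\oplus T_yS^2$ are complementary. Hence $\mathcal{F}_1,\mathcal{F}_2$ are both transverse foliations of $\overline{\Delta}$ with single intersection points, and transporting back along the Hamiltonian isotopy gives the claim for arbitrary $L$.

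The only genuinely non-trivial input is Hind's classification theorem itself --- that every Lagrangian sphere in $Q$ is Hamiltonian isotopic to $\overline{\Delta}$ --- whose proof uses a delicate $SFT$-type neck-stretching argument to produce the relevant $J$-holomorphic foliations; this I would simply cite, as it is the substance of \cite{HindS2S2}. An alternative route, avoiding the appeal to the full classification, would be to reprove the existence of the adapted $J$ directly by Hind's neck-stretching method: degenerate $Q$ along the boundary of a Weinstein neighbourhood of $L$, analyze the limiting buildings in the two relevant homology classes, and show that for a suitable almost complex structure no bubbling or multiple-cover degeneration occurs, so the moduli spaces remain smooth foliations meeting $L$ once and transversely. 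The main obstacle in that approach --- and the reason it is genuinely hard --- is the compactness and regularity analysis of the $J$-holomorphic buildings arising in the stretched limit, in particular ruling out configurations where a curve component sinks entirely into the symplectization of the unit cotangent bundle $S^*S^2$; controlling these is precisely the technical core of Hind's work.
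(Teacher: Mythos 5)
Your reduction is formally valid: the property is indeed natural under symplectomorphisms ($\phi_*J$ is tame and its Gromov foliations are the $\phi$-images of those of $J$, in the same classes $A_i$ since $\phi$ acts trivially on homology), and your verification that the two rulings of $J_{\std}=j\oplus j$ each meet $\overline{\Delta}$ transversely at a single point is correct. The problem is that the argument is circular in substance. Hind's classification of Lagrangian spheres in $Q$ --- the statement you cite as your only nontrivial input --- is itself \emph{deduced from} Theorem~\ref{thm: Hind}: given an adapted $J$, one matches the foliations $\mathcal{F}_i$ with $\mathcal{F}_i^{\std}$ to build a diffeomorphism carrying $L$ to $\overline{\Delta}$, and a Moser argument together with Gromov's computation of $\pi_0\Symp(Q,\ow)$ upgrades it to a Hamiltonian diffeomorphism; this is exactly the outline recalled in the introduction. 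So your proposal does not supply an independent proof of the theorem; it merely relabels which part of \cite{HindS2S2} is being taken as a black box, and the SFT neck-stretching you defer to in your last paragraph is not an ``alternative route'' but the actual content of the statement.

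That said, the paper does not reprove Theorem~\ref{thm: Hind} in full generality either --- it cites Hind for the general statement. What it proves, elementarily, is the special case it actually needs: when $L=\Fix(R)$ is a \emph{real} Lagrangian sphere (Proposition~\ref{prop: real_hind}). There one chooses an $R$-anti-invariant $J\in\mathcal{J}_R$; then $R$ carries leaves of $\mathcal{F}_1$ to leaves of $\mathcal{F}_2$ (Lemma~\ref{lem: involution_spheres}), so two points of $\Sigma_1\cap L$ would be two intersection points of $\Sigma_1$ with the leaf $R(\overline{\Sigma}_1)\in\mathcal{F}_2$, contradicting $A_1\bullet A_2=1$ together with positivity of intersections; transversality follows by applying the same trick to a common tangent vector. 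This uses only classical $J$-holomorphic curve theory and no neck-stretching. If you want a self-contained proof in the setting of this paper, that equivariant argument is the route to take; if you only need the general statement, cite the foliation result of \cite{HindS2S2} directly rather than deriving it from the classification that depends on it.
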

See Section~\ref{sec: gromovfoliation} for the definition of Gromov foliation theorem.
If $L=\overline{\Delta}$, then the Gromov's foliations associated to the standard complex structure $J_{\std}=j_{\C P^1}\oplus j_{\C P^1}$ satisfies the desired properties in Theorem~\ref{thm: hind}.
Since any Lagrangian sphere $L$ in a symplectic 4-manifold has self-intersection number $-2$, the homology class $[L]$ is equal to one of $\pm (A_1-A_2)\in H_2(Q)$, where $A_1=[S^2\times \{\pt\}]$ and $A_2=[\{\pt\}\times S^2]$ are generators.
Hence, the intersection number of $L$ and any Gromov's leaf is equal to $\pm1$, and they intersect minimally and transversely under the situation of Theorem~\ref{thm: Hind}.
With this understood, any orientation preserving diffeomorphism from $L$ to $\overline{\Delta}$ extends uniquely to a self-diffeomorphism of $Q$ by sending the leaves of the Gromov's foliations corresponding to $J$ to the ones from $J_{\std}$.
Then Moser isotopy furnishes a symplectomorphism acting trivially on homology, which turns out to be a Hamiltonian diffeomorphism by the celebrated result of Gromov.
Therefore, any Lagrangian sphere $L$ is mapped to $\overline{\Delta}$ by a Hamiltonian diffeomorphism.


When $L=\Fix(R)$ is a real Lagrangian sphere with $R\in \A(Q,\ow)$, we can prove Theorem~\ref{thm: Hind} using \emph{$R$-anti-invariant} tame almost complex structures, see Proposition~\ref{prop: real_hind}.
This allows us to construct a diffeomorphism of $Q$ which is equivariant with respect to the involutions $R$ and $R_{\overline{\Delta}}$. 
Finally, one can still apply the Moser trick in an equivariant way to obtain $\Phi\in \Ham(Q,\ow)$ satisfying $R=\Phi^*R_{\overline{\Delta}}$.
In contrast to the original work by Hind, we do not need a sophisticated method, namely SFT, to obtain Theorem~\ref{thm: Hind}.
Instead, the \emph{classical} methods in $J$-holomorphic curves theory are totally enough.

\begin{figure}[h]
\begin{center}
\begin{tikzpicture}[scale=0.85]
\node[blue] at (-4.1,0) {$R$};
\node[red] at (-4.1,-1.5) {$\Fix(R)$};
\draw[blue] [-implies,double equal sign distance] (1.2,0) -- (1.6, 0);
\draw[blue] [-implies,double equal sign distance] (5.8,0) -- (6.2, 0);
\draw[red] [-implies,double equal sign distance] (1.2,-1.5) -- (1.6, -1.5);
\draw[red] [-implies,double equal sign distance] (5.8,-1.5) -- (6.2, -1.5);
\draw [-implies,double equal sign distance] (-1,-0.5) -- (-1, -1);
\draw [-implies,double equal sign distance] (3.7,-0.5) -- (3.7, -1);
\draw [-implies,double equal sign distance] (8,-0.5) -- (8, -1);
\node[blue] at (-1,0) {Hamiltonian isotopic};
\node[red] at (-1,-1.5) {Hamiltonian isotopic};
\node[blue] at (3.7, 0) {isotopic};
\node[red] at (3.7, -1.5) {Lagrangian isotopic};
\node[blue] at (8.1, 0) {smoothly isotopic};
\node[red] at (8.1, -1.5) {smoothly isotopic};
\draw (-5,0.5)--(10,0.5)--(10,-2)--(-5,-2)--(-5,0.5);
\end{tikzpicture}
\end{center}
\vspace{-0.5cm}
\caption{Relations between isotopies of $R$ and $\Fix(R)$}
\label{fig: 1}
\end{figure}
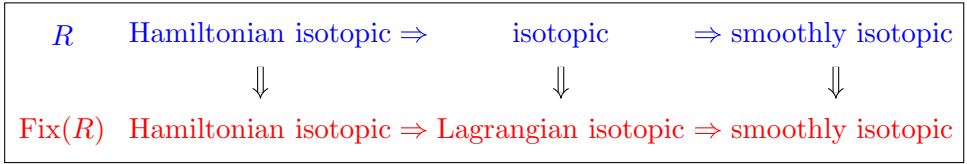
\vspace{-0.7cm}

\section{Proof of the Main Theorem}\label{sec: proof}
\subsection{Recollection of Gromov's foliation theorem}\label{sec: gromovfoliation}
Let $\mathcal{J}$ be the space of tame almost complex structures on the monotone symplectic quadric $Q$, which is non-empty and contractible \cite[Theorem~2.6.4]{MS}.
Fix $J\in \mathcal{J}$ and consider the \emph{moduli space of $J$-holomorphic spheres in the homology class $A_i$},
$$
\mathcal{M}(A_i,J)=\{u\colon \C P^1\to Q\mid \bar{\p}_J(u)=0,\ u_*[\C P^1]=A_i\}/\PSL(2,\C),
$$
where $\bar{\p}_J(u)=\frac{1}{2}(du+J\circ du\circ j_{\C P^1})\in \Omega^{0,1}(\C P^1,u^*TQ)$ denotes the complex anti-linear part of $du$.
Since every non-constant $J$-holomorphic sphere in $Q$ has positive Chern number and $A_i$ are primitive classes, every $u\in \mathcal{M}(A_i,J)$ is not multiply covered, and hence is simple, see \cite[Lemma~9.4.6]{McSalJholo}.
From the adjunction formula \cite[Theorem~2.6.4 or Corollary~E.1.7]{McSalJholo},
$$
A_i\bullet A_i-\langle c_1(Q),A_i \rangle +\chi(\C P^1)=0,
$$
which is a numerical criterion for simple $J$-holomorphic curves to be embedded, every $u\in \mathcal{M}(A_i,J)$ is embedded.
It follows from the automatic transversality, see \cite[Theorem~1]{HSL} or \cite[Lemma~3.3.3]{McSalJholo}, that for every $J\in \mathcal{J}$ the moduli space $\mathcal{M}(A_i,J)$ is a smooth manifold of dimension
$$
\dim_\C Q\cdot \chi(\C P^1)+2\langle c_1(Q), A_i\rangle - \dim_\R \PSL(2,\C) = 2.
$$
Since every $J$-holomorphic sphere in the homology class $A_i$ has the minimal possible positive energy, Gromov's compactness theorem asserts that $\mathcal{M}(A_i,J)$ is compact.
By positivity of intersections \cite[Section~2.6]{McSalJholo}, any two $J$-holomorphic spheres in $\mathcal{M}(A_1,J)$ and $\mathcal{M}(A_2,J)$ intersect transversely at a single point as $A_1\bullet A_2=1$.
Let $\widehat{\mathcal{M}}(A_i,J)$ be the space of parametrized $J$-holomorphic spheres in $Q$ representing $A_i$.
Since the evaluation map
\begin{equation}\label{eq: eval_map}
\ev_i\colon \mathcal{M}_1(A_i,J):=\widehat{\mathcal{M}}(A_i,J)\times_{\PSL(2,\C)}\C P^1 \longrightarrow Q,\qquad \ev\big([u,z]\big)=u(z)	
\end{equation}
is a diffeomorphism \cite[Proposition~9.4.4]{McSalJholo}, for each $i=1,2$ there exists a unique curve $\Sigma_i(\boldsymbol{x})$ in $\mathcal{M}(A_i,J)$ passing through any given point $\boldsymbol{x}\in Q$.
Therefore, $\mathcal{F}_1=\mathcal{M}(A_1,J)$ and $\mathcal{F}_2=\mathcal{M}(A_2,J)$ determine two transversal foliations of $Q$ whose leaves are embedded $J$-holomorphic spheres in the homology classes $A_1$ and $A_2$, respectively.
We call $\mathcal{F}_1$ and $\mathcal{F}_2$ the \emph{Gromov's foliations} associated to $J$.
We let $\mathcal{F}_i^{\std}$ be the Gromov's foliations associated to the standard complex structure $J_{\std}=j_{\C P^1}\oplus j_{\C P^1}$.
Analogously, their leaves passing through $\boldsymbol{x}\in Q$ are denoted by $\Sigma_i^{\std}(\boldsymbol{x})\in \mathcal{F}_i^{\std}$.

\subsection{A real analogue of Theorem~\ref{thm: Hind}}
We begin with the following homological result, see \cite[Section~2.1]{Kim2} for the proof.
\begin{lemma}\label{lem: inducedmap_invol}
For any $R\in \A(Q,\ow)$ the map $R_*$ induced in $H_2(Q)\cong \Z^2$ satisfies either
	$$
	C_1=\begin{pmatrix}
		0 & -1 \\
		-1 & 0
	\end{pmatrix}
	\quad\text{or}\quad
	C_2=\begin{pmatrix}
		-1 & 0 \\
		0 & -1
	\end{pmatrix}.
	$$
	Moreover, the following holds:
	\begin{enumerate}
		\item[(i)] $R_*=C_1$ if and only if $\Fix(R)$ is diffeomorphic to $S^2$.
		\item[(ii)] $R_*=C_2$ if and only if $\Fix(R)$ is either empty or diffeomorphic to $T^2$.
	\end{enumerate}
\end{lemma}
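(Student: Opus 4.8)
The plan is to prove the dichotomy first and then recover the topology of $\Fix(R)$ from the Lefschetz fixed point theorem. For the dichotomy I would begin by noting that $R$ is orientation preserving: since $R^\ast(\ow\w\ow)=(R^\ast\ow)\w(R^\ast\ow)=\ow\w\ow$ and $\ow\w\ow$ is a volume form on the $4$-manifold $Q$, the involution $R$ preserves the symplectic orientation. Hence $R_\ast$ is an automorphism of $H_2(Q;\Z)\cong\Z^2$ preserving the intersection form, which in the basis $(A_1,A_2)$ is given by $A_1\bullet A_1=A_2\bullet A_2=0$ and $A_1\bullet A_2=1$; an elementary computation over $\Z$ shows the automorphisms with this property are exactly $\pm\id$ and $\pm\tau_\ast$, where $\tau(x,y)=(y,x)$. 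To eliminate two of these I would use $R^\ast[\ow]=-[\ow]$, which, as $[\ow]$ is Poincar\'e dual to $A_1+A_2$, forces $R_\ast(A_1+A_2)=-(A_1+A_2)$; this excludes $\id$ and $\tau_\ast$ and leaves precisely $R_\ast\in\{C_1,C_2\}$.

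For the refined statement I would invoke the Lefschetz fixed point theorem. Averaging a Riemannian metric turns $R$ into an isometric involution, so $\Fix(R)$ is a closed totally geodesic submanifold along which $dR$ is $+\id$ tangentially and $-\id$ normally; in particular the fixed locus is nondegenerate with positive local indices, and the theorem gives
\[
\chi\bigl(\Fix(R)\bigr)=\sum_i(-1)^i\tr\bigl(R_\ast\big|_{H_i(Q;\Q)}\bigr)=2+\tr\bigl(R_\ast\big|_{H_2(Q;\Q)}\bigr),
\]
using that $R_\ast$ is the identity on $H_0$ and, by orientation-preservation, on $H_4$. Thus $\chi(\Fix(R))=2$ if $R_\ast=C_1$ and $\chi(\Fix(R))=0$ if $R_\ast=C_2$. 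This already yields the ``if'' directions of (i) and (ii): if $\Fix(R)\cong S^2$ then $\chi=2$ forces $\tr(R_\ast|_{H_2})=0$, hence $R_\ast=C_1$; and if $\Fix(R)$ is empty or a torus then $\chi=0$ forces $R_\ast=C_2$. It also shows $\Fix(R)\ne\emptyset$ whenever $R_\ast=C_1$.

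It remains to identify $L:=\Fix(R)$ up to diffeomorphism when it is non-empty, knowing only that it is a closed Lagrangian surface with $\chi(L)\in\{0,2\}$. Here I would run Smith theory: $\dim_{\Z/2}H_\ast(L;\Z/2)\le\dim_{\Z/2}H_\ast(Q;\Z/2)=4$ with the difference even, so the mod-$2$ total Betti number of $L$ lies in $\{0,2,4\}$; since each component of a closed surface contributes at least $2$, the non-empty possibilities are $L\cong S^2$, $T^2$, the Klein bottle $K$, or $S^2\sqcup S^2$. Matching against $\chi(L)$ leaves $L\cong S^2$ when $R_\ast=C_1$ and $L\cong T^2$ or $L\cong K$ when $R_\ast=C_2$, so the whole lemma reduces to excluding a Lagrangian Klein bottle, for which I would cite the non-existence of a Lagrangian Klein bottle in the monotone $S^2\times S^2$ (equivalently, the topological classification of involutions of $S^2\times S^2$ inducing $C_2$). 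I expect this last point to be the main obstacle: it is the only step where characteristic-class and Smith-theoretic bookkeeping does not suffice and a genuinely symplectic input (or an external classification result) is needed, whereas the matrix computation, the reduction $R^\ast[\ow]=-[\ow]$, and the Lefschetz count are all routine.
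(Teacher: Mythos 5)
Your argument is essentially the standard one: the paper itself gives no proof of this lemma but defers to \cite[Section~2.1]{Kim2}, and the proof there follows the same skeleton you propose (intersection-form automorphisms plus $R^*[\ow]=-[\ow]$ for the dichotomy, the Lefschetz fixed point theorem for $\chi(\Fix(R))$, the Smith inequality and parity for the total mod-2 Betti number, and an external input to rule out the Klein bottle). All of your routine steps check out: the four form-preserving automorphisms of the hyperbolic lattice are indeed $\pm\id$ and $\pm\tau_*$; the local Lefschetz indices are positive because $dR$ is $-\id$ on the rank-two normal bundle, so $\det(\id-dR|_{\nu})=4>0$; and the enumeration of closed surfaces with even total mod-2 Betti number at most $4$ is exhaustive. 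You are also right that the Klein bottle is the one genuine obstacle: no amount of characteristic-class bookkeeping excludes it (note $w_1(K)^2=w_2(K)=\chi(K)=0$ mod $2$, so even the Lagrangian condition $w_2(TQ)|_L=w_1(L)^2$ is vacuous here). The citation you propose does exist and suffices: Shevchishin's theorem rules out Lagrangian Klein bottles in the monotone $S^2\times S^2$ (this is also implicit in the topological classification of involutions of the quadric that the paper quotes from \cite[6.11.7 and 6.11.8]{Enriques}). So your write-up is correct as a reduction, with the final step honestly deferred to the literature --- exactly as in the source the paper cites.
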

For $R\in \A(Q,\ow)$ we abbreviate by $\mathcal{J}_R=\{J\in \mathcal{J} \mid J=-R^*J\}$ the \emph{space of tame almost complex structures on $Q$ which are $R$-anti-invariant}.
Note that $\mathcal{J}_R$ is non-empty and contractible by S\'{e}vennec's arguments \cite[Proposition~1.1]{Welsch}.
One can directly check that the anti-symplectic involution $R_{\overline{\Delta}}$ sends a leaf in $\mathcal{F}_1^{\std}$ to a leaf in $\mathcal{F}_2^{\std}$, but it reverses the complex orientation on the leaves.
More generally, we observe the following.
\begin{lemma}\label{lem: involution_spheres}
Let $R\in \A(Q,\ow)$ such that $\Fix(R)$ is a Lagrangian sphere and let $J\in \mathcal{J}_R$. If $\Sigma_1\in \mathcal{F}_1$, then $R(\overline{\Sigma}_1)\in \mathcal{F}_2$. Similarly, if $\Sigma_2\in \mathcal{F}_2$, then $R(\overline{\Sigma}_2)\in \mathcal{F}_1$.
\end{lemma}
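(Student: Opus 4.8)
The plan is to argue homologically first and then upgrade to an assertion about leaves using the uniqueness in Gromov's foliation theorem. Take $\Sigma_1 \in \mathcal{F}_1$, so $[\Sigma_1] = A_1$. Since $R$ is anti-symplectic with $\Fix(R)$ a Lagrangian sphere, Lemma~\ref{lem: inducedmap_invol}(i) gives $R_* = C_1$, hence $R_*A_1 = -A_2$. The curve $R(\Sigma_1)$ is the image of a $J$-holomorphic sphere under $R$, and because $J \in \mathcal{J}_R$ satisfies $R^*J = -J$, the composition $R \circ u$ is $J$-holomorphic \emph{after reversing the complex orientation} on the domain (this is the standard fact that an anti-holomorphic map postcomposed with a $J$-holomorphic map, in an $R$-anti-invariant structure, is again $J$-holomorphic with reversed orientation; equivalently $\bar{\partial}_J(R\circ u \circ c) = 0$ where $c$ is complex conjugation on $\C P^1$). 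Thus $R(\overline{\Sigma}_1)$ is a $J$-holomorphic sphere, and its homology class is $R_* A_1 = -A_2$ with the orientation conventions taken care of by the bar, i.e.\ $[R(\overline{\Sigma}_1)] = A_2$.

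Next I would invoke the structure of $\mathcal{M}(A_2,J)$ established in Section~\ref{sec: gromovfoliation}: every $J$-holomorphic sphere in class $A_2$ is simple, embedded, and the moduli space is a smooth compact $2$-manifold foliating $Q$, with exactly one leaf through each point. Since $R(\overline{\Sigma}_1)$ is an embedded $J$-holomorphic sphere in class $A_2$, it is precisely one of these leaves, i.e.\ $R(\overline{\Sigma}_1) \in \mathcal{F}_2$. The argument for $\Sigma_2 \in \mathcal{F}_2$ is symmetric, using $R_* A_2 = -A_1$.

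The only genuinely delicate point is the orientation bookkeeping: one must check carefully that precomposing with complex conjugation $c$ on $\C P^1$ is exactly what is needed so that $R \circ u \circ c$ is $J$-holomorphic (not merely $\bar{J}$-holomorphic for some other structure), and that after this the resulting parametrized curve represents $A_2$ rather than $-A_2$ as an \emph{oriented} $J$-holomorphic sphere — which is automatic once $J$-holomorphicity is in place, since $J$-holomorphic spheres carry the complex orientation and $A_2$ is the class of $\mathcal{F}_2$-leaves. I expect this to be a short verification from the definition $\bar\partial_J u = \tfrac12(du + J \circ du \circ j_{\C P^1})$ together with $R^*J = -J$ and $c^*j_{\C P^1} = -j_{\C P^1}$; everything else is a direct application of the results already recalled. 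Note the Lagrangian sphere hypothesis enters only to pin down $R_* = C_1$ via Lemma~\ref{lem: inducedmap_invol}, ensuring $\mathcal{F}_1$ and $\mathcal{F}_2$ are genuinely interchanged rather than each preserved.
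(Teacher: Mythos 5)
Your proposal is correct and follows essentially the same route as the paper: precompose a parametrization $u$ of $\Sigma_1$ with an anti-holomorphic involution of $\C P^1$, check that $\bar{\p}_J(R\circ u\circ \rho)=dR\circ\bar{\p}_J(u)\circ d\rho=0$ using $R^*J=-J$, and compute the class $(R\circ u\circ\rho)_*[\C P^1]=R_*(-A_1)=A_2$ via Lemma~\ref{lem: inducedmap_invol}. The orientation bookkeeping you flag as delicate is exactly the short verification the paper carries out, and your final appeal to the structure of $\mathcal{M}(A_2,J)$ is immediate since $\mathcal{F}_2$ is by definition that moduli space.
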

\begin{proof}
Let $u\colon \C P^1\to Q$ be a $J$-holomorphic sphere in the homology class $A_1$, which parametrizes $\Sigma_1\in \mathcal{F}_1$.
Pick any anti-holomorphic involution $\rho$ of $\C P^1$ which is necessarily orientation-reversing.
We then verify that
\begin{align*}
\bar{\p}_J(R\circ u\circ \rho) &= dR \circ \bar{\p}_J(u) \circ d\rho = 0 \\
(R\circ u \circ \rho)_*[\C P^1] &= R_*(-u_*[\C P^1]) = R_*(-A_1) = A_2.	
\end{align*} 
Here the last equality follows from Lemma~\ref{lem: inducedmap_invol}.
We have seen that $R\circ u\circ \rho$ is a $J$-holomorphic sphere representing the class $A_2$, that is, $R(\overline{\Sigma}_1)\in \mathcal{F}_2$.
The same argument also works for a leaf $\Sigma_2\in \mathcal{F}_2$.
\end{proof}

Now we prove Theorem~\ref{thm: Hind} using $R$-anti-invariant tame almost complex structures.
The proof is quite elementary.
\begin{proposition}\label{prop: real_hind}
	Let $L=\Fix(R)$ be a real Lagrangian sphere in $Q$ and $J\in \mathcal{J}_R$. Then each leaf of the Gromov's foliations $\mathcal{F}_1$ and $\mathcal{F}_2$ associated to $J$ intersects $L$ transversely at a single point.
\end{proposition}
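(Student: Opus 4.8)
The plan is to pit a leaf against its image under $R$ and invoke positivity of intersections for that \emph{pair} of genuine $J$-holomorphic curves, rather than attempting to intersect the Lagrangian $L$ with a holomorphic curve directly. Fix a leaf $\Sigma_1\in\mathcal{F}_1$. Since $J\in\mathcal{J}_R$, Lemma~\ref{lem: involution_spheres} (which in turn uses Lemma~\ref{lem: inducedmap_invol}(i), as $\Fix(R)$ is a sphere) tells us that the underlying set $R(\Sigma_1)=R(\overline{\Sigma}_1)$ is a leaf of $\mathcal{F}_2$. As $A_1\ne A_2$, the curves $\Sigma_1$ and $R(\Sigma_1)$ are distinct, and both are embedded; hence, exactly as recalled in Section~\ref{sec: gromovfoliation}, positivity of intersections together with $A_1\bullet A_2=1$ forces $\Sigma_1$ and $R(\Sigma_1)$ to meet at a single point $p$, transversely, i.e.\ $T_p\Sigma_1\oplus T_pR(\Sigma_1)=T_pQ$.

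The next step is to see that this point $p$ lies on $L$ and is the only point of $\Sigma_1\cap L$. Applying $R$ to the equality $\Sigma_1\cap R(\Sigma_1)=\{p\}$ and using $R^2=\id_Q$ gives $R(\Sigma_1)\cap\Sigma_1=\{R(p)\}$, hence $R(p)=p$, that is $p\in\Fix(R)=L$. Conversely, if $q\in\Sigma_1\cap L$ then $q=R(q)\in R(\Sigma_1)$, so $q\in\Sigma_1\cap R(\Sigma_1)=\{p\}$; thus $\Sigma_1\cap L=\{p\}$.

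It remains to check transversality of $\Sigma_1$ and $L$ at $p$. Since $R$ restricts to the identity on $L$, we have $dR_p v=v$ for every $v\in T_pL$. Suppose $0\ne v\in T_p\Sigma_1\cap T_pL$; then $v=dR_pv\in dR_p(T_p\Sigma_1)=T_pR(\Sigma_1)$, because $R\colon\Sigma_1\to R(\Sigma_1)$ is a diffeomorphism fixing $p$. Hence $v\in T_p\Sigma_1\cap T_pR(\Sigma_1)=\{0\}$, a contradiction, so $T_p\Sigma_1\cap T_pL=\{0\}$. As $\dim_\R T_p\Sigma_1=\dim_\R T_pL=2$ and $\dim_\R Q=4$, this yields $T_p\Sigma_1\oplus T_pL=T_pQ$, i.e.\ the intersection is transverse. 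The statement for $\mathcal{F}_2$ follows symmetrically, using the second half of Lemma~\ref{lem: involution_spheres}.

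I do not expect a serious obstacle here; the only points requiring care are that $R(\overline{\Sigma}_1)$ and $\Sigma_1$ be honestly distinct curves (ensured by $A_2\ne A_1$) so that positivity of intersections applies, and that the whole scheme is completely dependent on the choice $J\in\mathcal{J}_R$ — without $R$-anti-invariance of $J$ the set $R(\overline{\Sigma}_1)$ need not be $J$-holomorphic and the argument collapses.
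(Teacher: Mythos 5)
Your proof is correct and follows essentially the same strategy as the paper: play $\Sigma_1$ off against the leaf $R(\overline{\Sigma}_1)\in\mathcal{F}_2$ and use the transverse single-point intersection of the two foliations to get both uniqueness and transversality of $\Sigma_1\cap L$. The only (harmless) variation is that you obtain the existence of the intersection point from $R$-invariance of the singleton $\Sigma_1\cap R(\overline{\Sigma}_1)$, whereas the paper deduces it from the homological computation $[L]\bullet[\Sigma_1]=\mp(A_1-A_2)\bullet A_1=\mp 1\ne 0$.
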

\begin{proof}
Without loss of generality, we consider the case when a leaf $\Sigma_1\in \mathcal{F}_1$ is given. Since the algebraic intersection number of $L$ and $\Sigma_1$ is non-zero, more precisely, $[L]\bullet [\Sigma_1] = \pm(A_1-A_2)\bullet A_1=\mp 1$, it follows that $L$ and $\Sigma_1$ must have an intersection point.
To see that $\Sigma_1$ and $L$ has a \emph{unique} intersection point, let $\boldsymbol{x},\boldsymbol{y}\in \Sigma_1\cap L$ be given.
By Lemma~\ref{lem: involution_spheres}, we can consider another leaf $\Sigma_2:=R(\overline{\Sigma}_1)$ of $\mathcal{F}_2$.
Since $\boldsymbol{x},\boldsymbol{y}$ are fixed points of $R$, we observe that $\boldsymbol{x},\boldsymbol{y}\in \Sigma_2\cap L$, and hence $\boldsymbol{x},\boldsymbol{y}\in \Sigma_1\cap \Sigma_2$. 
This implies that $\boldsymbol{x}=\boldsymbol{y}$ as claimed.
It remains to check that $\Sigma_1$ intersects $L$ \emph{transversely} at a single point $\boldsymbol{x}$. Let $X\in T_{\boldsymbol{x}}\Sigma_1\cap T_{\boldsymbol{x}}L$ be given.
Since $T_{\boldsymbol{x}}L=\{X\in T_{\boldsymbol{x}}Q\mid dR(X)=X\}$ and $dR(T_{\boldsymbol{x}}\Sigma_1)=T_{\boldsymbol{x}}\Sigma_2$, we deduce that $X=dR(X)\in T_{\boldsymbol{x}}\Sigma_2\cap T_{\boldsymbol{x}}L$, 
which implies $X\in T_{\boldsymbol{x}}\Sigma_1\cap T_{\boldsymbol{x}}\Sigma_2$.
Since $\Sigma_1$ and $\Sigma_2$ intersect transversely, we must have $X=0$.
Therefore, $\Sigma_1$ and $L$ intersect transversely for dimensional reasons.
\end{proof}

\subsection{A diffeomorphism of $Q$ induced by transversal foliations}
Let $R\in \A(Q,\ow)$ such that $L=\Fix(R)$ is a Lagrangian sphere.
We shall construct an explicit diffeomorphism $\Phi$ of $Q$ which is equivariant with respect to the anti-symplectic involutions $R$ and $R_{\overline{\Delta}}$.

Choose any diffeomorphism $\phi\colon L\to \overline{\Delta}$ which is orientation-preserving.
Pick $J\in\mathcal{J}_R$ and write $\mathcal{F}_1$ and $\mathcal{F}_2$ for the corresponding Gromov's foliations.
It follows from Proposition~\ref{prop: real_hind} that we can extend $\phi$ to a unique diffeomorphism $\Phi\colon Q\to Q$ determined by sending the leaves of the foliations $\mathcal{F}_1$ and $\mathcal{F}_2$ to the leaves of the standard foliations $\mathcal{F}^{\std}_1$ and $\mathcal{F}^{\std}_2$.
To express $\Phi$ explicitly, for $i=1,2$ we define the map 
$$
\pi_i\colon Q \longrightarrow L,\qquad \pi_i(\boldsymbol{x})=\Sigma_i(\boldsymbol{x})\cap L,
$$
which sends a leaf in $\mathcal{F}_i$ to its unique intersection point with $L$.
Since every leaf in $\mathcal{F}_i$ and $L$ intersect transversely, $\pi_i$ is smooth, see Remark~\ref{rem: smooth}.
The diffeomorphism $\Phi$ of $Q$ is then defined by
\begin{equation}\label{eq: phi_map}
\Phi(\boldsymbol{x}) := \Sigma_1^{\std}\big(\phi\pi_1(\boldsymbol{x})\big)\cap \Sigma^{\std}_2\big(\phi\pi_2(\boldsymbol{x})\big).
\end{equation}
whose inverse is given by
	$$
	\Phi^{-1}(\boldsymbol{x})=\Sigma_1(\phi^{-1}\pi_1^{\std}(\boldsymbol{x}))\cap \Sigma_2(\phi^{-1}\pi_2^{\std}(\boldsymbol{x})).
	$$
Here $\pi_i^{\std}$ are defined analogously using $J_{\std}$.
Since $\Phi$ preserves the leaves of $\mathcal{F}_i$ and $\mathcal{F}_i^{\std}$ and $\phi$ is orientation preserving, $\Phi$ acts trivially on homology, that is, the induced map $\Phi_*$ on $H_*(Q)$ is the identity.
\begin{remark}\label{rem: smooth}
The discussion below is somewhat technical and implicit in \cite{HindS2S2}, but for the sake of completeness, we explain that $\Phi$ and $\Phi^{-1}$ are \emph{smooth} following the arguments in the proof of \cite[Theorem~9.4.7]{McSalJholo}.
In that proof, it is shown that for any $J\in \mathcal{J}$ the maps 
\begin{align*}
\begin{alignedat}{11}
		\Lambda\colon &Q\longrightarrow Q,\qquad &\Lambda(\boldsymbol{x})&=\Sigma_1(\boldsymbol{x})\cap \Sigma_2(\boldsymbol{x})\\
		\sigma_i\colon &Q\longrightarrow \mathcal{M}(A_i,J),\qquad &\sigma_i(\boldsymbol{x})&=\Sigma_i(\boldsymbol{x})
\end{alignedat}
\end{align*}
are smooth.
It thus suffices to show that $\pi_i$ are smooth since then $
\Phi =\Lambda\circ (\phi\times \phi) \circ (\pi_1\times \pi_2)$ is smooth as well.
Consider the map $\eta_i\colon \mathcal{M}(A_i,J)\to L$ defined by $\eta_i(\Sigma_i)=\Sigma_i\cap L$.
By Proposition~\ref{prop: real_hind}, it is well-defined and bijective.
From the expression $\pi_i=\eta_i\circ \sigma_i$, it remains to check that $\eta_i$ is smooth.
Let $\pr_i\colon \mathcal{M}_1(A_i,J)\to \mathcal{M}(A_i,J)$ denote the projection, where $\mathcal{M}_1(A_i,J)$ is defined in \eqref{eq: eval_map}.
Since the evaluation map $\ev_i$ in \eqref{eq: eval_map} is a diffeomorphism, $\eta_i^{-1}=\pr_i\circ \ev_i|_L^{-1}$ is smooth.
From the splitting $T_{\boldsymbol{x}}L\oplus T_{\boldsymbol{x}}\Sigma_i(\boldsymbol{x})=T_{\boldsymbol{x}}Q$ for each $\boldsymbol{x}\in L$, we deduce that the differential of $\eta_i^{-1}$ at every point of $L$ is bijective.
Therefore, $\eta_i$ is a diffeomorphism by the inverse function theorem.
This shows that $\Phi$ is smooth, and the same argument also yields that $\Phi^{-1}$ is smooth.
\end{remark}
By virtue of the choice of $J$ in $\mathcal{J}_R$, the following observation is straight forward.
\begin{lemma}\label{lem: Phi_equiv}
	The diffeomorphism $\Phi\colon Q\to Q$ defined in \eqref{eq: phi_map} satisfies 
\begin{equation}\label{eq: Phi_equiv}
		\Phi \circ R = R_{\overline{\Delta}} \circ \Phi.
\end{equation}
\end{lemma}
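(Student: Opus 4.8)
The plan is to verify \eqref{eq: Phi_equiv} directly from the defining formula \eqref{eq: phi_map}, using the fact that $J\in\mathcal{J}_R$ forces $R$ to interchange the two Gromov foliations (Lemma~\ref{lem: involution_spheres}) and that $R_{\overline{\Delta}}$ does the same for the standard foliations. First I would unwind the right-hand side: $R_{\overline{\Delta}}\circ\Phi(\boldsymbol{x}) = R_{\overline{\Delta}}\bigl(\Sigma_1^{\std}(\phi\pi_1(\boldsymbol{x}))\cap\Sigma_2^{\std}(\phi\pi_2(\boldsymbol{x}))\bigr)$. Since $R_{\overline{\Delta}}$ sends each point to the intersection of the $R_{\overline{\Delta}}$-images of the two standard leaves through it, and since $R_{\overline{\Delta}}(\overline{\Sigma_1^{\std}(\boldsymbol{y})})=\Sigma_2^{\std}(R_{\overline{\Delta}}(\boldsymbol{y}))$ and $R_{\overline{\Delta}}(\overline{\Sigma_2^{\std}(\boldsymbol{y})})=\Sigma_1^{\std}(R_{\overline{\Delta}}(\boldsymbol{y}))$ for $\boldsymbol{y}\in\overline{\Delta}$ (the defining property of $R_{\overline{\Delta}}$ on the standard foliations, noted just before Lemma~\ref{lem: involution_spheres}), the right-hand side becomes $\Sigma_1^{\std}\bigl(R_{\overline{\Delta}}\phi\pi_2(\boldsymbol{x})\bigr)\cap\Sigma_2^{\std}\bigl(R_{\overline{\Delta}}\phi\pi_1(\boldsymbol{x})\bigr)$.

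Next I would compute the left-hand side $\Phi(R(\boldsymbol{x})) = \Sigma_1^{\std}\bigl(\phi\pi_1(R(\boldsymbol{x}))\bigr)\cap\Sigma_2^{\std}\bigl(\phi\pi_2(R(\boldsymbol{x}))\bigr)$, so the whole identity reduces to the two scalar (i.e.\ $\overline{\Delta}$-valued) equations
\begin{equation*}
\phi\,\pi_1(R(\boldsymbol{x})) = R_{\overline{\Delta}}\,\phi\,\pi_2(\boldsymbol{x}),\qquad
\phi\,\pi_2(R(\boldsymbol{x})) = R_{\overline{\Delta}}\,\phi\,\pi_1(\boldsymbol{x}).
\end{equation*}
The key intertwining relation on the source side is $\pi_1\circ R = R\circ\pi_2$ and $\pi_2\circ R = R\circ\pi_1$ as maps $Q\to L$. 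This follows because $R$ fixes $L$ pointwise, $R$ maps $\Sigma_2(\boldsymbol{x})$ to a leaf of $\mathcal{F}_1$ (namely $R(\overline{\Sigma_2(\boldsymbol{x})})$, by Lemma~\ref{lem: involution_spheres}), and hence the leaf of $\mathcal{F}_1$ through $R(\boldsymbol{x})$ is $R(\overline{\Sigma_2(\boldsymbol{x})})$; intersecting with $L$ and using $R|_L=\id$ gives $\pi_1(R(\boldsymbol{x})) = R(\overline{\Sigma_2(\boldsymbol{x})})\cap L = R(\Sigma_2(\boldsymbol{x})\cap L) = R(\pi_2(\boldsymbol{x}))$, and symmetrically for the other index. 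Substituting, the two scalar equations become $\phi\circ R = R_{\overline{\Delta}}\circ\phi$ on $L$.

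So everything comes down to the single assertion that the chosen diffeomorphism $\phi\colon L\to\overline{\Delta}$ conjugates $R|_L$ to $R_{\overline{\Delta}}|_{\overline{\Delta}}$. But $R|_L = \id_L$ and $R_{\overline{\Delta}}|_{\overline{\Delta}} = \id_{\overline{\Delta}}$, since $L=\Fix(R)$ and $\overline{\Delta}=\Fix(R_{\overline{\Delta}})$, so this is automatic for \emph{any} $\phi$. I expect the only genuine obstacle to be bookkeeping: keeping straight which foliation index gets swapped at each stage (the orientation-reversal, indicated by the overline on $\overline{\Sigma}$, is irrelevant to which leaf one lands on, only to the parametrization), and making sure the argument for $\pi_i\circ R = R\circ\pi_{3-i}$ is stated cleanly using Lemma~\ref{lem: involution_spheres} rather than re-deriving it. One should perhaps also remark that $\Phi$ is well defined on all of $Q$ — but this was already established via Proposition~\ref{prop: real_hind} and Remark~\ref{rem: smooth} — so the present lemma is, as claimed in the text, straightforward.
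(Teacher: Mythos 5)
Your proposal is correct and follows essentially the same route as the paper: both arguments rest on Lemma~\ref{lem: involution_spheres} (that $R$, resp.\ $R_{\overline{\Delta}}$, swaps the two foliations, resp.\ the standard ones), the identity $\pi_1\circ R=\pi_2$ coming from $R|_L=\id_L$, and the fact that $R_{\overline{\Delta}}$ fixes $\overline{\Delta}$ pointwise. The only cosmetic difference is that you expand $R_{\overline{\Delta}}\circ\Phi$ and reduce to equations on $L$, whereas the paper expands $\Phi\circ R$ and factors $R_{\overline{\Delta}}$ out of the intersection; the content is identical.
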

\begin{proof}
Using Lemma~\ref{lem: involution_spheres}, for all $\boldsymbol{x}\in Q$ we have
\begin{itemize}
	\item $\Sigma_1(\boldsymbol{x})=R\big(\overline{\Sigma}_2(R(\boldsymbol{x}))\big)$. 
	\item $\Sigma_1^{\std}(\boldsymbol{x})=R_{\overline{\Delta}}\big(\overline{\Sigma}_2^{\std}(R_{\overline{\Delta}}(\boldsymbol{x}))\big)$.
	\item $\pi_1(\boldsymbol{x})=\pi_2(R(\boldsymbol{x}))$.
\end{itemize}
Here the last item follows from
$$
\pi_1(\boldsymbol{x}) = \Sigma_1(\boldsymbol{x})\cap L = R\big(\overline{\Sigma}_1(\boldsymbol{x})\cap L\big) = R \big(\overline{\Sigma}_1(\boldsymbol{x})\big) \cap L = \overline{\Sigma}_2\big(R(\boldsymbol{x})\big)\cap L = \pi_2(R(\boldsymbol{x})).
$$
We readily see that
\begin{align*}
	\Phi(R(\boldsymbol{x})) &= \Sigma_1^{\std}\big(\phi\pi_1(R(\boldsymbol{x}))\big)\cap \Sigma_2^{\std}\big(\phi\pi_2(R(\boldsymbol{x}))\big) \\
	&= \Sigma_1^{\std}\big(\phi\pi_2(\boldsymbol{x})\big)\cap \Sigma_2^{\std}\big(\phi\pi_1(\boldsymbol{x})\big) \\
	&= R_{\overline{\Delta}}\big(\overline{\Sigma}_2^{\std}(R_{\overline{\Delta}}\phi\pi_2(\boldsymbol{x}))\big) \cap R_{\overline{\Delta}}\big(\overline{\Sigma}_1^{\std}(R_{\overline{\Delta}}\phi\pi_1(\boldsymbol{x}))\big) \\
	&= R_{\overline{\Delta}}\big(\overline{\Sigma}_2^{\std}(\phi\pi_2(\boldsymbol{x}))\big) \cap R_{\overline{\Delta}}\big(\overline{\Sigma}_1^{\std}(\phi\pi_1(\boldsymbol{x}))\big) \\
	&= R_{\overline{\Delta}}\Big[\overline{\Sigma}_2^{\std}(\phi\pi_2(\boldsymbol{x}))\cap\overline{\Sigma}_1^{\std}(\phi\pi_1(\boldsymbol{x}))\Big] \\
	&= R_{\overline{\Delta}}(\Phi(\boldsymbol{x})).
\end{align*}
This completes the lemma.
\end{proof}

\subsection{Equivariant Moser trick}
The diffeomorphism $\Phi$ does not necessarily preserve the symplectic form $\ow$, so we employ the Moser's trick to adjust the pull-back $(\Phi^{-1})^*\ow$ while keeping its compatibility with the involutions.
\begin{lemma}\label{lem: Moser}
	There exists a diffeomorphism $\Psi\colon Q\to Q$ such that
\begin{itemize}
	\item[(i)] $\Psi^*\ow=(\Phi^{-1})^*\ow$.
	\item[(ii)] $\Psi\circ R_{\overline{\Delta}}=R_{\overline{\Delta}}\circ \Psi$.
	\item[(iii)] $\Psi$ acts trivially on homology.\end{itemize}
\end{lemma}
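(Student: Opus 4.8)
The plan is to interpolate between $\omega_0:=\ow$ and $\omega_1:=(\Phi^{-1})^*\ow$ by a path of symplectic forms and then run an $R_{\overline{\Delta}}$-equivariant version of Moser's stability argument along it.

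First I would record the two formal facts that make this work. Because $\Phi$ acts trivially on homology, $\Phi^*$ is the identity on $H^*(Q;\R)$, so $[\omega_0]=[\omega_1]$ and $\omega_1\w\omega_1$ induces the same orientation of $Q$ as $\omega_0\w\omega_0=\ow\w\ow$. Moreover, Lemma~\ref{lem: Phi_equiv} rearranges to $\Phi^{-1}\circ R_{\overline{\Delta}}=R\circ\Phi^{-1}$, hence
\[
R_{\overline{\Delta}}^*\omega_1=(\Phi^{-1}\circ R_{\overline{\Delta}})^*\ow=(R\circ\Phi^{-1})^*\ow=(\Phi^{-1})^*(R^*\ow)=-(\Phi^{-1})^*\ow=-\omega_1 .
\]
So $\omega_0$ and $\omega_1$ are closed, cohomologous, and both $R_{\overline{\Delta}}$-anti-invariant.

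The key step — and the only one that is not pure bookkeeping — is to show that $\omega_t:=(1-t)\omega_0+t\omega_1$ is symplectic for all $t\in[0,1]$; then each $\omega_t$ is closed, cohomologous to $\omega_0$, and $R_{\overline{\Delta}}$-anti-invariant. Here I would use the geometry behind the construction of $\Phi$. Work in the global product splitting $TQ=V_1\oplus V_2$, where $V_i=\ker d\pr_{3-i}$ is the tangent distribution of the standard foliation $\mathcal{F}_i^{\std}$. With respect to this splitting $\omega_0=\ow_0\oplus\ow_0$ has vanishing $(V_1,V_2)$-cross term and its two diagonal blocks are positive area forms. Since, by construction, $\Phi^{-1}$ carries each leaf of $\mathcal{F}_i^{\std}$ diffeomorphically onto a $J$-holomorphic leaf of $\mathcal{F}_i$, and does so orientation-preservingly (using $\Phi_*=\id$ on $H_2$ and that $\ow$ restricts positively to every $\mathcal{F}_i$-leaf), the restriction $\omega_1|_{V_i}$ is again a positive area form. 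Writing $\omega_t=\alpha_t\oplus\gamma_t+t\beta$ with $\alpha_t,\gamma_t$ the (positive) restrictions of $\omega_t$ to $V_1,V_2$ and $\beta$ the cross term of $\omega_1$, one gets $\omega_t\w\omega_t=2\,\alpha_t\w\gamma_t+t^2\,\beta\w\beta$. Non-degeneracy of $\omega_1$ gives the pointwise bound $\beta\w\beta>-2\,\alpha_1\w\gamma_1$, while pointwise $\alpha_t=(1-t)\alpha_0+t\alpha_1\ge t\alpha_1$ and $\gamma_t\ge t\gamma_1$ force $\alpha_t\w\gamma_t\ge t^2\,\alpha_1\w\gamma_1$; combining these gives $\omega_t\w\omega_t>0$ everywhere. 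Hence $(\omega_t)_{t\in[0,1]}$ is a path of symplectic forms.

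Finally I would run the equivariant Moser trick along this path. The exact form $\dot\omega_t=\omega_1-\omega_0$ is $R_{\overline{\Delta}}$-anti-invariant, so averaging any primitive against $R_{\overline{\Delta}}$ produces $\sigma\in\Omega^1(Q)$ with $d\sigma=\omega_1-\omega_0$ and $R_{\overline{\Delta}}^*\sigma=-\sigma$. Let $X_t$ be defined by $\iota_{X_t}\omega_t=-\sigma$; anti-invariance of both $\omega_t$ and $\sigma$ forces $(R_{\overline{\Delta}})_*X_t=X_t$, so the isotopy $\psi_t$ generated by $X_t$ with $\psi_0=\id$ commutes with $R_{\overline{\Delta}}$, and the standard computation yields $\psi_t^*\omega_t=\omega_0$. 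Then $\Psi:=\psi_1^{-1}$ satisfies $\Psi^*\ow=\Psi^*\omega_0=\omega_1=(\Phi^{-1})^*\ow$, which is~(i); it commutes with $R_{\overline{\Delta}}$, which is~(ii); and it is isotopic to the identity through $\psi_t^{-1}$, hence acts trivially on homology, which is~(iii). I expect the non-degeneracy claim of the previous paragraph to be the only real obstacle; once it is in place, the rest is routine manipulation of pullbacks and the usual Moser homotopy.
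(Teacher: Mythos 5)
Your proof is correct, and its overall architecture coincides with the paper's: interpolate linearly between $\ow$ and $(\Phi^{-1})^*\ow$, check that the path stays symplectic, cohomologically constant, and $R_{\overline{\Delta}}$-anti-invariant, then run Moser with the anti-averaged primitive so that the Moser vector field is $R_{\overline{\Delta}}$-invariant and its flow commutes with $R_{\overline{\Delta}}$; your reversal of the parametrization and passage to $\psi_1^{-1}$ is immaterial. The one place where you genuinely diverge is the only non-routine step, the non-degeneracy of $\ow_t$. The paper verifies it by a taming argument: the Gromov foliations of $(\Phi^{-1})^*J$ are the standard ones, the splitting $TQ=V_1\oplus V_2$ is $\ow$-orthogonal and $(\Phi^{-1})^*J$-invariant, so both $\ow$ and $(\Phi^{-1})^*\ow$ tame $(\Phi^{-1})^*J$ and hence so does every convex combination. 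You instead compute $\ow_t\wedge\ow_t$ in the block decomposition induced by the same splitting and bound it below by $t^2\,\ow_1\wedge\ow_1$ plus manifestly nonnegative terms; the estimate is correct, and the needed positivity of $\ow_1|_{V_i}$ is legitimately extracted from $\Phi_*=\id$ on $H_2$ together with the positivity of $\ow$ on $J$-holomorphic leaves. Both arguments rest on the same geometric input (that $\Phi$ matches the foliations leafwise and orientation-preservingly); yours is slightly more elementary in that the almost complex structure plays no role at this stage, while the paper's taming argument is shorter and avoids the pointwise inequality. Your cohomological step ($\Phi^*=\id$ on $H^2_{\dR}(Q)$ since $\Phi_*=\id$ on homology) likewise replaces, equivalently, the paper's evaluation of $(\Phi^{-1})^*\ow$ over the standard leaves.
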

Consider a smooth family of closed 2-forms on $Q$ given by
$$
\ow_t:=(1-t)(\Phi^{-1})^*\ow+t\ow,\qquad t\in [0,1].
$$
\begin{lemma}
For each $t\in [0,1]$ we have
\begin{enumerate}
	\item[(i)]\label{eq: (i)} $\ow_t$ is a symplectic form.
	\item[(ii)] $[\ow_t]=[\ow]$ in $H_{\dR}^2(Q)$.
	\item[(iii)] $R_{\overline{\Delta}}^*\ow_t=-\ow_t$.
\end{enumerate}	
\end{lemma}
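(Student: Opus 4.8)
The plan is to treat the three items separately, leaving the non-degeneracy in (i) for last since it is the only substantive point. For (ii), I would argue that since $\Phi$ acts trivially on $H_*(Q)$ so does $\Phi^{-1}$, whence $(\Phi^{-1})^*$ is the identity on $H^2_{\dR}(Q)$; thus $[(\Phi^{-1})^*\ow]=[\ow]$ and therefore $[\ow_t]=(1-t)[\ow]+t[\ow]=[\ow]$. For (iii), I would use Lemma~\ref{lem: Phi_equiv}, which gives $\Phi\circ R=R_{\overline{\Delta}}\circ\Phi$, equivalently $R\circ\Phi^{-1}=\Phi^{-1}\circ R_{\overline{\Delta}}$; together with $R^*\ow=-\ow$ (valid as $R\in\A(Q,\ow)$) this yields $R_{\overline{\Delta}}^*(\Phi^{-1})^*\ow=(\Phi^{-1}\circ R_{\overline{\Delta}})^*\ow=(R\circ\Phi^{-1})^*\ow=-(\Phi^{-1})^*\ow$, while $R_{\overline{\Delta}}^*\ow=-\ow$ since $R_{\overline{\Delta}}$ is anti-symplectic, so $R_{\overline{\Delta}}^*\ow_t=-\ow_t$ by linearity.

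For (i), closedness of $\ow_t$ is immediate, being a convex combination of the closed forms $(\Phi^{-1})^*\ow$ and $\ow$, so the point is non-degeneracy, and here I would first simplify by applying $\Phi^*$. Since $\Phi$ is a diffeomorphism and $\Phi^*(\Phi^{-1})^*\ow=\ow$, the form $\ow_t$ is symplectic if and only if $\Phi^*\ow_t=(1-t)\ow+t\,\Phi^*\ow$ is, so it suffices to show that $\ow'_t:=(1-t)\ow+t\,\Phi^*\ow$ is symplectic for every $t\in[0,1]$. On the $4$-manifold $Q$ this amounts to $\ow'_t\wedge\ow'_t$ being nowhere zero; since $\ow'_t\wedge\ow'_t=(1-t)^2\,\ow\wedge\ow+2t(1-t)\,\ow\wedge\Phi^*\ow+t^2\,\Phi^*\ow\wedge\Phi^*\ow$, it is enough that $\ow\wedge\ow$, $\Phi^*\ow\wedge\Phi^*\ow$ and the cross term $\ow\wedge\Phi^*\ow$ are all everywhere positive multiples of one fixed volume form, since then the displayed sum is a strictly positive combination for each $t\in[0,1]$. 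Of these, $\ow\wedge\ow>0$ because $\ow$ is symplectic, and $\Phi^*\ow\wedge\Phi^*\ow=\Phi^*(\ow\wedge\ow)$ is a positive multiple of it because $\Phi$ is orientation preserving (it acts trivially on $H_4(Q)$).

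The remaining point — positivity of the cross term $\ow\wedge\Phi^*\ow$ — is where the Gromov foliations enter. Fixing $J\in\mathcal{J}_R$ with associated foliations $\mathcal{F}_1,\mathcal{F}_2$ (the data used to construct $\Phi$), I would work at a point $\boldsymbol{x}\in Q$ in the transversal splitting $T_{\boldsymbol{x}}Q=T_{\boldsymbol{x}}\Sigma_1(\boldsymbol{x})\oplus T_{\boldsymbol{x}}\Sigma_2(\boldsymbol{x})$. Write $\ow=p_1^*\ow_0+p_2^*\ow_0$ with $p_1,p_2\colon S^2\times S^2\to S^2$ the two factor projections. Since $\Phi$ carries leaves of $\mathcal{F}_i$ to leaves of $\mathcal{F}_i^{\std}$ and $p_i$ is constant on leaves of $\mathcal{F}_{3-i}^{\std}$, the composite $p_i\circ\Phi$ is constant along the leaves of $\mathcal{F}_{3-i}$; hence $\Phi^*\ow=(p_1\circ\Phi)^*\ow_0+(p_2\circ\Phi)^*\ow_0$ has \emph{no mixed component} with respect to the above splitting and restricts to an area form on each summand, positive there (for the $J$-complex orientations of the leaves) because $\Phi$ restricts to an orientation-preserving diffeomorphism of each $\mathcal{F}_i$-leaf onto an $\mathcal{F}_i^{\std}$-leaf — a degree-one map of spheres representing $+A_i$, again using $\Phi_*=\id$ on $H_2(Q)$ — and $\ow$ is positive on the $J_{\std}$-holomorphic standard leaves. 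On the other hand $\ow$ itself restricts to a positive area form on each $T_{\boldsymbol{x}}\Sigma_i(\boldsymbol{x})$, since $\Sigma_i(\boldsymbol{x})$ is $J$-holomorphic and $J$ is $\ow$-tame, though $\ow$ may well carry a nonzero mixed component.

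A short wedge computation in this splitting — in which the mixed component of $\ow$ drops out for dimensional reasons — then gives $\ow\wedge\Phi^*\ow=\ow|_{T\Sigma_1}\wedge\Phi^*\ow|_{T\Sigma_2}+\ow|_{T\Sigma_2}\wedge\Phi^*\ow|_{T\Sigma_1}$, a sum of products of positive area forms on the complementary $J$-complex $2$-planes $T_{\boldsymbol{x}}\Sigma_1(\boldsymbol{x})$ and $T_{\boldsymbol{x}}\Sigma_2(\boldsymbol{x})$, hence a positive multiple of $\ow\wedge\ow$. This settles (i), and with (ii) and (iii) the lemma follows. The step I expect to be fiddliest is the last one: checking that every restriction appearing is \emph{positively} oriented, consistently with the orientation induced by $\ow\wedge\ow$. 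This is forced rather than deep, once one fixes the convention that the $J$-complex orientation of an $\mathcal{F}_i$-leaf is the one representing $+A_i\in H_2(Q)$, so that the $\ow$-tameness of $J$ identifies the orientation of $\ow\wedge\ow$ with the product of the $J$-complex orientations of the two leaves through $\boldsymbol{x}$.
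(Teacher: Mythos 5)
Your proof is correct, and for the substantive part (non-degeneracy in (i)) it takes a genuinely different route from the paper. The paper's argument is a ``convexity of taming'' argument: it shows that \emph{both} endpoint forms $\ow$ and $(\Phi^{-1})^*\ow$ tame the single almost complex structure $(\Phi^{-1})^*J$ --- the point being that the Gromov foliations of $(\Phi^{-1})^*J$ are the standard ones, so $TQ$ splits into two $(\Phi^{-1})^*J$-invariant, $\ow$-orthogonal plane fields on which positivity can be checked leafwise --- and then non-degeneracy of every convex combination $\ow_t$ is automatic since taming is a convex condition. You instead prove non-degeneracy via positivity of the wedge square on the $4$-manifold, reducing to the cross term $\ow\wedge\Phi^*\ow>0$, which you establish by decomposing at each point along $T\Sigma_1\oplus T\Sigma_2$ and checking that the mixed component of $\ow$ drops out while all the surviving factors are positively oriented area forms. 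Both arguments ultimately rest on the same geometric input (the foliation splitting, $\Phi$ carrying leaves to leaves, and $\Phi_*=\id$ on homology to pin down orientations), and your orientation bookkeeping at the end is the correct resolution of the one delicate point; the paper's version buys a slightly shorter write-up because convexity does the $t$-dependence for free, whereas yours is more hands-on and makes the positivity of $\ow\wedge\Phi^*\ow$ explicit. Your treatments of (ii) (via $\Phi_*=\id$ rather than integrating over leaves) and (iii) (via Lemma~\ref{lem: Phi_equiv}) are equivalent to the paper's.
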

\begin{proof}
To show (i), we claim that $\ow_t$ is compatible with $(\Phi^{-1})^*J$, that is, $\ow_t(X,(\Phi^{-1})^*JX)>0$ for all non-zero $X\in TQ$.
By naturality, we have $(\Phi^{-1})^*\ow(X,(\Phi^{-1})^*JX)>0$.
Since the Gromov's foliations associated to $(\Phi^{-1})^*J$ coincide with the standard foliations $\mathcal{F}_i^{\std}$ (in particular, the tangent bundle $TQ$ admits a decomposition into two transversal $(\Phi^{-1})^*J$-invariant subbundles which are $\ow$-orthogonal), we deduce $\ow(X,(\Phi^{-1})^*JX)>0$.
Moreover, $(\Phi^{-1})^*\ow$ evaluates to 1 over a leaf of $\mathcal{F}_i^{\std}$ for $i=1,2$, so $(\Phi^{-1})^*[\ow]=[\ow]\in H^2_{\dR}(Q)$.
This implies (ii).
By Lemma~\ref{lem: Phi_equiv}, (iii) follows.
\end{proof}
We are in position to apply the Moser trick, see \cite[Theorem~3.2.4]{MS}.
Since $[\ow_1]=[\ow_0]$, there exists a 1-form $\beta$ on $Q$ such that $\ow_1-\ow_0=d\beta$.
Then $\ow_t=(\Phi^{-1})^*\ow + td\beta$.
Since $R_{\overline{\Delta}}^*d\beta=-d\beta$, the anti-averaged 1-form $\tilde{\beta}:=\frac{1}{2}(\beta-R_{\overline{\Delta}}^*\beta)$ satisfies $\ow_t=(\Phi^{-1})^*\ow+td\tilde{\beta}$ and $R_{\overline{\Delta}}^*\tilde{\beta}=-\tilde{\beta}$.
Let $X_t$ be the Moser vector field associated to $\tilde{\beta}$, that is, the unique solution of the equation $\ow_t(X_t,\cdot)+\tilde{\beta}=0$.
The flow $\Psi_t$ of $X_t$ satisfies $\Psi_t^*\ow_t=\ow_0$ and $\Psi_t\circ R_{\overline{\Delta}}=R_{\overline{\Delta}}\circ \Psi_t$ for all $t\in [0,1]$.
Noting that $R_{\overline{\Delta}}^*X_t=X_t$, the equation
$$
\frac{d}{dt}(R_{\overline{\Delta}}\circ \Psi_t\circ R_{\overline{\Delta}})=dR_{\overline{\Delta}}\Big(\frac{d}{dt}(\Psi_t\circ R_{\overline{\Delta}})\Big)=dR_{\overline{\Delta}}\big(X_t(\Psi_t\circ R_{\overline{\Delta}})\big)=X_t(R_{\overline{\Delta}}\circ \Psi_t\circ R_{\overline{\Delta}}),
$$
and the uniqueness of the solution show that $\Psi_t$ and $R_{\overline{\Delta}}$ commute.
The time-1 flow $\Psi:=\Psi_1$ satisfies the properties (i)--(iii).
The condition (iii) follows since $\Psi$ is isotopic to the identity.
This completes the proof of Lemma~\ref{lem: Moser}.
\begin{proof}[Proof of the Main Theorem.]
Note that $\Gamma=\Psi\circ \Phi\in \Symp(Q,\ow)$ acts trivially on homology and that $\Gamma\circ R=R_{\overline{\Delta}}\circ \Gamma$.
By Gromov's theorem, $\Gamma$ is symplectically isotopic to the identity, and hence $\Gamma\in \Ham(Q,\ow)$ as $H^1_{\dR}(Q)=0$.
This shows that $R\in \A(Q,\ow)$ is Hamiltonian isotopic to $R_{\overline{\Delta}}$ as sought.
\end{proof}

\subsection*{Acknowledgement}
The authors cordially thank Viatcheslav Kharlamov and Vsevolod Shevchishin for kind explanation of their work.
JM specially thanks her advisor Suyoung Choi for continued support and encouragement.
JK is supported by a KIAS Individual Grant SP068001 via the Center for Mathematical Challenges at Korea Institute for Advanced Study and by the POSCO Science Fellowship of POSCO TJ Park Foundation.
JM is supported by NRF-2019R1A2C2010989 and Samsung Science and Technology Foundation under Project Number SSTF-BA1901-01.

\bibliographystyle{abbrv}
\bibliography{mybibfile}
\noindent
\footnotesize{Korea Institute for Advanced Study (KIAS), 85 Hoegiro, Dongdaemun-gu, Seoul 02455, Republic of Korea\vspace{0.1cm}\\
\emph{E-mail address: }\texttt{joontae@kias.re.kr}\vspace{0.3cm}\\
Department of Mathematics, Ajou University, 206 Worldcup-ro, Suwon 16499, Republic of Korea\vspace{0.1cm}\\
\emph{E-mail address: }\texttt{j9746@ajou.ac.kr}}

\end{document}